\documentclass[preprint,12pt]{article}
\usepackage[ansinew]{inputenc}
\usepackage{epsfig}
\usepackage[usenames]{color}
\usepackage{amsfonts}
\usepackage{latexsym}
\usepackage{graphicx}
\usepackage{amsmath}
\usepackage{amssymb}
\newtheorem{definition}{Definition}[section]

\newtheorem{proposition}[definition]{Proposition}

\newtheorem{example}[definition]{Example}

\newtheorem{remark}[definition]{Remark}
\newtheorem{theorem}[definition]{Theorem}
\newtheorem{e.g.}[definition]{Example}

\def\r#1{\mathbb R^{#1}}

\def\X{\mathfrak X}

\def\ovgam{\overline\gamma}

\usepackage{amssymb}

\title{The Taylor Expansion of the Exponential Map and Geometric Applications}

\author{M. G. Monera, A. Montesinos-Amilibia$^*$ and\\
 E. Sanabria-Codesal\thanks{Work partially supported by DGCYT grant no. MTM2009-08933.}}
\begin{document}
\maketitle

\begin{abstract}
In this work we consider the Taylor expansion of the exponential map of a submanifold immersed in $\mathbb{R}^n$
up to order three, in order to introduce the concepts of \textit{lateral and frontal deviation}.
We compute the directions of extreme lateral and frontal deviation for surfaces in
$\mathbb{R}^3.$ Also we compute, by using the Taylor expansion, the
directions of high contact with hyperspheres of a surface immersed
in $\mathbb{R}^4$ and the asymptotic directions of a surface
immersed in $\mathbb{R}^5.$

\end{abstract}

\emph{Keywords:} Exponential map, Surfaces,  Extremal directions,  Contact,  Normal torsion
\\
\emph{MSC:} 53A05, 53B20

\section{Introduction}
In this paper, we analyze the Taylor expansion of the exponential map up to order three of a submanifold $M$ immersed in $\r n.$ Our main goal is to show its
usefulness for the description of special contacts of the submanifolds with geometrical models. Classically, the study of the contact with hyperplanes and hyperspheres has been realized by using the family of height and squared distance functions (\cite{P1},\cite{Montesis}). As we analyze the contacts of high order, the complexity of the calculations increases. In this work, through the Taylor expansion of the exponential map, we characterize the geometry of order higher than $3$ in terms of invariants of the immersion, so that the operations be more affordable. Also this new technic give us new geometric concepts.

On the one hand, we gain some geometrical insights, as we explain now. Let $M$ be a regular surface immersed in $\r n$ and $\gamma:I \to \r{}$ be the geodesic defined in $M$ by the initial condition $\gamma'(0)= v \in T_mM.$ Let  $g: I \to  \r n$ be  the geodesic defined in $\r n$ with the initial velocity, that is $g(t)= m + v t.$ The difference $\gamma-g$ gives the geodesic deviation of the immersion for the initial condition $v.$ The Taylor  expansion of $\gamma(t) -g (t)$ begins with the second order term which is proportional to the second fundamental form of $M$ at $m$ acting upon $v,$ say $\alpha(v,v).$ It is orthogonal to $T_mM$ and  its meaning is well known. The third term has in general non-vanishing orthogonal and tangential components with respect to $T_mM.$  The orthogonal component depends essentially on the third order geometry of the surface, that is on the covariant derivative of the second fundamental form. The tangential component, on its part, depends only on the second fundamental form at $m$ and may be decomposed naturally into two components, one  tangent to $v$ and the other orthogonal to it. We call the first, the \emph{frontal deviation}, and the second, the \emph{lateral deviation}. We shall distinguish the directions $v\in T_mM$ on which the norm or the frontal deviation (resp. the lateral deviation) are extremal. In the case of $M$ being a surface, there  are in general at most four
directions of each or these classes. We shall show that the directions where the lateral deviation vanishes are the directions of higher contact of a geodesic with the normal section of the surface.

On the other hand, we obtain an expression for the normal torsion in terms of invariants related to the second fundamental form and its covariant derivative.

Finally, we compute by using the Taylor expansion of the exponential map, the directions of higher contact with hyperspheres of a surface in $\r 4,$ defined by
J. Montaldi in \cite{M}, and characterize the centers of these hyperspheres through the normal curvature and normal torsion. We also characterize the asymptotic
directions of a surface in $\r 5.$ In both cases, the results are given in terms of invariants of the immersion, so that the numerical or symbolic computation of
those directions becomes affordable, not hampered by the recourse to Monge's, isothermal or other special coordinates as in other works (\cite{M}, \cite{MRR}).

\section{Preliminaries}

Let $M$ be a differentiable manifold immersed in $\r n.$ Since all of our study will be local, we gain in brevity by assuming that it is a regular submanifold. For
each $m\in M$ we  consider the decomposition $T_m\r n= T_m M \oplus N_m M,$ where $N_mM$ denotes the normal subspace to $M$ at $m.$ Given $X\in T_m \r n ,$ that
decomposition will be written as $X=X^\top+X^\bot$ where $X^\top \in T_mM,$ $X^\bot\in N_mM.$

Let $\pi:TM\to M$ and $\pi_N:NM\to M$ denote the tangent and normal bundles respectively. If $E$ is the total space of a smooth bundle we will denote by $\Gamma(E)$
the space of its smooth sections. For the particular case of $TM$ we will put $\X(M)$ instead. We define the connection $\nabla^\top$ for $\pi$ by
$\nabla_{X}^\top Y=(D_X Y)^\top,\, X,Y\in\X(M),$ where $D$ is the Riemannian connection in $\r n$ which coincides with the directional derivative. For $\pi_N$ we
define the connection $ \nabla^\bot$ by $\nabla_{X}^\bot u=(D_X u)^\bot, \; u\in \Gamma(NM).$ These connections define a new connection $\nabla$  in
$\Gamma(T^{(r,s)}M\otimes N^{(p,q)}M)$ such that if, for example, we have $w=u\otimes Y\otimes \beta,$ where  $u\in \Gamma(NM),$ $Y\in \mathfrak{X}(M),$
$\beta\in\Gamma(T^\ast M)$ then:
  $$
\nabla_X w= (\nabla_{X}^\bot u)\otimes Y\otimes \beta + u\otimes \nabla_{X}^\top Y\otimes \beta + u\otimes Y\otimes \nabla_{X}^\top
\beta.
  $$
This connection preserves the inner product.

The second fundamental form $\alpha:\X(M)\times\X(M) \to  \Gamma(NM)$ is
the bilinear symmetric map defined by $\alpha(X,Y)=(D_XY)^\bot.$ Thus, if $u\in \Gamma(NM),$ we will have
 $u\cdot\alpha(X,Y)= -(D_Xu)\cdot Y.$

\subsection{Surfaces}

Let $M$ be a surface immersed in $\r n$ and we consider $(t_1,t_2)$ a local orthonormal frame of $TM$ on $U\subset M.$
For each $m\in U,$ the unit circle $S^1(T_mM)$ of $T_mM$ can be parameterized by the angle $\theta \in [0, 2 \pi]$ with respect to the value of $t_1
$ at $m$ and we define the map $\eta_m:S^1(T_mM)\to N_mM$ by $\eta_m(\theta)=\alpha_m(t(\theta),t(\theta)),$ where $t(\theta)=t_{1}\cos \theta + t_{2}\sin\theta.$
Therefore:
\begin{align*}
  \eta(\theta) & =   \alpha(t_1,t_1)\cos^2\theta+ \alpha(t_2,t_2)\sin^2\theta+ 2\alpha(t_1,t_2)\sin\theta\cos\theta.
\end{align*}

Putting $b_1=\alpha(t_1,t_1), $ $b_2=\alpha(t_2,t_2) $ and $b_3=\alpha(t_1,t_2),$ then
  $$
\eta(\theta)= H+B\cos2\theta+ C\sin2\theta,
  $$
where $H=\dfrac{1}{2}(b_1+ b_2),$ $B=\dfrac{1}{2}(b_1-b_2)$ and $C=b_3.$

Consider the affine subspace of $\r n$ which passes by $m$ and
is generated by $t(\theta)\in T_mM$ and $N_mM.$ The intersection of this subspace with M is a
curve that passes by $m,$ called the normal section of M determined by $t(\theta),$ and the curvature vector of this curve
coincide with $\eta_m(\theta)=\alpha_m(t(\theta),t(\theta)).$

The image of the map $\eta_m$ is an ellipse in $N_mM$ called \it curvature ellipse, \rm whose center is the vector $H_m$. Hence, this vector, called the \it mean
curvature vector, \rm does not depend on the choice of the orthonormal frame $(t_1, t_2).$  It is possible to choose this frame in such a way that $B$ and $C$
coincide with the  half-axes of the ellipse, i.e. $|B|\ge|C|$ and $B\cdot C=0.$

When the curvature ellipse at $m$  degenerates to a segment we say that the point
$m$ is \it semiumbilic \rm and if in addition a straight
line containing that segment passes by the origin then $m$ is called an \it inflection point. \rm If $m$ is semiumbilic, then the orthonormal frame
$(t_1, t_2)$ can be chosen in such a way that $C_m=0.$


\subsection{Contact theory}

Let $M_i, N_i,\, i=1,2$ be submanifolds of $I\!\! R^n$ with ${\rm dim}\,
M_1={\rm dim}\, M_2$ and ${\rm dim}\, N_1={\rm dim}\, N_2.$ We say
that the \textit{contact of} $M_1$ and $N_1$ at $y_1$ is of the same
type as the {\it contact of} $M_2$ and $N_2$ at $y_2$ if there is a
diffeomorphism germ $\Phi :( I\!\! R^n,y_1)\rightarrow (I\!\! R^n,y_2)$ such
that $\Phi (M_1)=M_2$ and $\Phi (N_1)=N_2.$ In this case we write $
K(M_1,N_1;y_1)=K(M_2,N_2;y_2). $ J. A. Montaldi gives in \cite{montaldi} the following
characterization of the notion of contact by using the terminology
of singularity theory:

\begin{theorem} Let $M_i, N_i$ $(i=1,2)$ be submanifolds of
$I\!\! R^n$ with ${\rm dim}\, M_1={\rm dim}\, M_2$ and ${\rm dim}\,
N_1={\rm dim}\, N_2.$ Let $f_i:(M_i,x_i)\rightarrow (I\!\! R^n, y_i)$
be immersion germs and $g_i:(I\!\! R^n,y_i)\rightarrow (I\!\! R^r,0)$ be
submersion germs with $(N_i,y_i)=(g_i^{-1}(0),y_i).$ In this case
$K(M_1,N_1;y_1)$ $=$ $K(M_2,N_2;y_2)$ if and only if the germ $(g_1\circ f_1, x_1)$ is
${\cal K}$-equivalent to the germ $(g_2\circ f_2, x_2)$.
\end{theorem}

Therefore, given two submanifolds $M$ and $N$ of $I\!\! R^n$, with a common point $y$,  an
immersion germ $f:(M,x)\rightarrow (I\!\! R^n, y)$ and a submersion
germ $g:( I\!\! R^n,y)\rightarrow (I\!\! R^r,0)$, such that $N= g^{-1}(0)$,
the contact of $M \equiv f(M)$ and $N$ at $y$ is completely
determined by the ${\cal K}$-singularity type of the germ $(g\circ
f, x)$ (see \cite{GG} for details on ${\cal K}$-equivalence).


When $N$ is a hypersurface, we have $r=1$, and the function germ
$(g\circ f, x)$ has a degenerate singularity if and only if its
Hessian, ${\cal H}(g\circ f)( x)$, is a degenerate quadratic form.
In such case, the tangent directions lying in the kernel of this
quadratic form are called \textit{contact directions} for $M$ and
$N$ at $y$.

We shall apply this theory to the contacts of surfaces with
hyperplanes and hyperspheres in $ \r n$. In the following $\phi:U\subset \r2\to \r n$ will be an immersed surface, where $M=\phi(U).$

\begin{definition}
The family of \it height functions \rm on $M,\;\;\lambda:  M\times S^{n-1}\to
 \r{}$  is defined as $\lambda_{u}(m) = \lambda(m,u)=m\cdot u,$ $u \in S^{n-1}$ where $S^{n-1}$ is the unit sphere in $ \r n$ centered at the origin.
\end{definition}

Varying $u$ we obtain a family of functions $\lambda_u$ on $M$ that describes all the possible contacts of $M$
with the hyperplanes on $\r n$ (\cite{M-R-R2}, \cite{MRR}).
The function $ \lambda_{u}$ has a singularity at $m=\phi(x_0,y_0) \in M$ if and only if
  $$
d_m\lambda_{u} =\left(\displaystyle{\frac{\partial
\phi}{\partial x}(x_0,y_0) \cdot u},\, \displaystyle{\frac{\partial
\phi}{\partial y}(x_0,y_0) \cdot u}\right)=(0,0),
  $$
which is equivalent to say that $u\in N_mM.$

Let $D\lambda : M \times S^{n-1} \rightarrow S^{n-1} \times I\!\! R$ be the
unfolding associated to the family $\lambda$. The singular set of $D\lambda$,
given by

$$
\Sigma (D\lambda ) = \{(m,u)\in M \times S^{n-1}:  d_m\phi\cdot u  =0\}
$$
can clearly be identified with a canal hypersurface, $CM$, of $M$ in $I\!\!R^n$. Moreover, the restriction of the
natural projection $\pi : M\times S^{n-1} \rightarrow S^{n-1}$ to the
submanifold $\Sigma (D\lambda)\equiv CM$ can be viewed as the normal Gauss
map, $\Gamma : CM \rightarrow S^{n-1}$  on the hypersurface $CM$. It is not
difficult to verify that $x$ is a degenerate singularity of $\lambda_u$ if and only if $(m,u)$ is a singular point of
$\Gamma$ if and only if ${\cal K}(m,u)=0$, where ${\cal K}$ denotes the gaussian
curvature function on $CM,$ i.e. ${\cal K}= det (d\Gamma)$, where $d\Gamma:T(CM)\rightarrow  TS^{k+n-1}.$

\begin{definition}\label{1.1}
If $m$ is a degenerate singularity (non Morse) of $ \lambda_{u},$ we say that $u$ defines a \textbf{binormal} direction
for $M$ at $m.$ The vector $v \in T_mM$ is an \textbf{asymptotic direction} at $m$ if and only if $v$ lies in the kernel of the
hessian of some height function $\lambda_{u}$ at $m.$ In this case we say that $v$ is an asymptotic direction associated to the
binormal direction $u$ at $m.$
\end{definition}

These directions were introduced in \cite{M-R-R2}, where their existence and
distribution over the generic submanifolds was analyzed.

\begin{definition}
The family of squared distance functions over $M,\;\; d^2:\;M \times \r n   \to  \r{},$ is defined by $d^2(m,u) = d^2_u(m)= \Vert m - u \Vert^2.$
\end{definition}

The singularities of this family give a measure of the contacts of the
immersion with the family of hyperspheres of $\r n$ (\cite{Montesis}, \cite{P1}). Then, we observe that the function $d^2_u$ has a singularity in a point $m\in M$ iff
  $$
\displaystyle{\frac{\partial \phi}{\partial x}(x_0,y_0)
\cdot \big(\phi(x_0,y_0) -u \big)}=0, \quad \displaystyle{\frac{\partial \phi}{\partial y}(x_0,y_0) \cdot
\big(\phi(x_0,y_0) -u \big)} =0,
  $$
which is equivalent to say that the point $u$ lies in the normal subspace to $M$ at $m.$

\begin{definition}\label{2.1}
Given a surface $M$ immersed in $\r n,$ if the squared distance function $d^2_u$ has a degenerate singularity at $m$ then we say that the point $u \in \r n$ is a
 \textbf{focal center} at $m \in M.$ The subset of $\r n$ made of  all the focal centers for all the points of $M$ is called \textbf{focal set} of $M$ in $\r{}.$
 A hypersphere tangent  to $M$ at $m$  whose center lies in the focal set of $M$ at $m$ is said to be a \textbf{focal hypersphere} of $M$ at $m.$
\end{definition}

The focal set is classically known as the singular set of the normal exponential map $\exp_M:NM \rightarrow \r n$ (\cite{P1}, \cite{MGM-AM-SM-ES}). It is easy to see that the directions
of higher contacts of $M$ with the focal hyperspheres are those contained in the kernel of the quadratic form
  $$
\frac12\operatorname{Hess}(d^2_u)=g_m-(m-u)\cdot \alpha_m,
  $$
where $g_m$ and $\alpha_m$ are the first and second fundamental forms at $m,$ respectively.

In the remainder of this subsection, we will assume that $n=4.$
It follows from a general result of Montaldi \cite{montaldi} (and
also Looijenga's Theorem in \cite{Lo}) that for a residual set of
immersions $\phi: M\to \mathbb R^4$, the family $d^2$ is a generic
family of mappings. (The notion of a generic family is defined in terms of
transversality to submanifolds of multi-jet spaces, see for example
\cite{GG}.) We call these immersions, \textit{generic immersions}.

Among all the focal hyperspheres which lie in the singular subset of the focal set of $M,$
we have some special ones corresponding to distance-squared functions (from their centers) having (corank $1$)
singularities of type $A_k,$ with $k \geq 3.$ Here, we remind that an $A_k$ singularity is a germ of function
$ I\!\! R^2 \rightarrow I\!\! R$ which can be transformed by a local change of coordinates in $I\!\! R^2$ to
the germ of $x_1^2\pm x_ 2^{k+1},$ \cite {A1}.

\begin{definition}\label{2.2}
The centers of the focal hyperspheres of $M$ which have contact of type $A_k,$  $k \geq 3$ are called \textbf{($\mathbf{k}$-order) ribs}
and they determine normal directions called \textbf{rib directions}. The corresponding points in $M$ are known as \textbf{($\mathbf{k}$-order)
ridges} and the corresponding directions are called \textbf{strong principal directions}.
\end{definition}

The $k$-order ridges with $k\geq 4$ (i.e. the $A_k$ singularities of squared distance functions with $k \geq 4$) are the singular points of the ridges set. For a
generic immersion, the ribs form a stratified subset of codimension one in the focal set and the $k$-order ridges, $k \geq 4,$ form curves
with the $5$-order ridges as isolated points, \cite{Montesis}.
Other special kind of focal hyperspheres is made by those corresponding to squared distance functions that have corank $2$ singularities. In this case, all the
coefficients of the quadratic form Hess$(d^2_u)$ vanish.

\begin{definition}[\cite{S-C-F}]\label{2.3}
A focal center of $M$ at a point $m$ is said to be an \textbf{umbilical focus} provided the corresponding squared distance function has a
singularity of corank  $2$ at $m.$ A tangent $3$-sphere centered at an umbilical focus is called \textbf{umbilical focal hypersphere}.
\end{definition}

Montaldi proved in \cite{Montesis} the following relation between the (non radial) semiumbilic points and umbilical focal hyperspheres: {\it A point $m \in M$ is a
(non radial) semiumbilic if and only if it is a singularity of corank $2$ of some distance squared function on $M,$ in other words, it is a contact point of $M$
with some umbilical focal hypersphere at $m.$}

The corank $2$ singularities of distance-squared functions on generically immersed surfaces in $I\!\! R^{4}$ belong to the series
$D_k^{\pm}$ (see \cite{A1}). Moreover, on a generic surface, there are only $D_4^{\pm}$ singularities along regular curves with isolated $D_5.$

\section{The Taylor expansion of the exponential map}

 Let $M$ be an immersed submanifold  in $\r n$  and  $m\in M.$ We know that there is an open neighborhood $U_m$ of $0\in T_mM$ such that the exponential map
$\exp_m: U_m\to\r n$ is an one-to-one immersion. We recall also that $\exp_m(x)= \gamma_x(1),$ where $\gamma_x: [0,1]\to\r n$ is the geodesic in $M$ with initial
condition $\gamma_x(0)=m,\; \gamma_x'(0)=x\in U_m.$ We shall consider the Taylor expansion of $\exp_m$ around the origin of $T_mM.$ It will be written as
  $$
\exp_m(x)= m + I_m(x) + \frac12 Q_m(x)+
\frac1{6}K_m(x)+\dots,
  $$
where $I_m, Q_m, K_m$ are respectively linear, quadratic and cubic forms in $T_mM$ with values in $\r n.$

Our purpose is to write these forms in terms more familiar with the usual techniques of differential geometry. Let $x\in U_m$ and put $x= t v,$ where $t\in\r{}$ and
$v\in S^1(T_mM)$ is a unit vector. Then, as it is well known, $\exp_m(x)= \exp_m(tv)=\gamma_v(t).$ Therefore
  $$
\gamma_v(t)=m+ I_m(v)t+ \frac12Q_m(v)t^2 + \frac1{6}K_m(v)t^3+O(t^4).
  $$
Hence, $\gamma_v'(0)= v = I_m(v),$ so that $I_m: T_mM\to\r n$ is the inclusion. We also have $\gamma''_v(0)= Q_m(v)$ and $\gamma'''_v(0)= K_m(v).$

Now, $\gamma_v$ is a geodesic in $M$ and this implies that at every $t$ we have $\gamma''_v(t)\in N_{\gamma_v(t)}M.$ In fact, we have then
$\gamma_v''(t)=\alpha_{\gamma_v(t)}(\gamma'(t),\gamma'(t)).$ Hence,
  $$
Q_m(v)= \gamma''_v(0)=\alpha_m(v,v).
  $$

Thus, it is clear that the second order geometry of $M$ around $m$ is determined by the value at $m$ of the second fundamental form of $M.$ Let us study the third
order geometry.

Let $\xi\in T_mM.$ We may make the parallel transport of $\xi$ along the geodesic $\gamma_v$ in order to have a parallel vector field $X(t)$ along that geodesic.
This means that $X(0)=\xi,\; X(t)\in T_{\gamma_v(t)}M$ and $X'(t)\in N_{\gamma_v(t)}M.$ Then, we will have $X\cdot\gamma_v''=0.$ Differentiating, we get
  \begin{align*}
X\cdot\gamma_v'''&= -X'\cdot\gamma_v''= -X'\cdot\alpha(\gamma_v',\gamma_v')=-(D_{\gamma_v'}X)\cdot \alpha(\gamma_v',\gamma_v')\\
                 &=-\alpha(X,\gamma_v')\cdot \alpha(\gamma_v',\gamma_v').
  \end{align*}
Hence, by evaluation at $t=0$ we have
  $$
\xi\cdot K_m(v)=\xi\cdot\gamma'''_v(0)=-\alpha_m(\xi,v)\cdot
\alpha_m(v,v).
  $$
We observe thus that the tangential part of the third order geometry at $m$ depends only on the second order geometry at $m.$ Now, let $\zeta\in N_mM.$ As before, we
 define the vector field $Z(t)$ along the curve $\gamma_v$ as the parallel transport of $\zeta.$ Thus, for any $t$ we will have
 $Z(t)\in N_{\gamma_v(t)}M$ and $Z'(t)\in T_{\gamma_v(t)}M.$ Hence $Z'\cdot \gamma_v''=0.$ Thus
  $$
Z\cdot\gamma_v'''=(Z\cdot\gamma_v'')'=(Z\cdot\alpha(\gamma_v',\gamma_v'))'
=Z\cdot\big(\nabla_{\gamma_v'}\alpha\big)(\gamma_v',\gamma_v'),
  $$
because $Z$ and $\gamma_v'$ are parallel along $\gamma_v$ and  $(Z\cdot\alpha(\gamma_v',\gamma_v'))'=D_{\gamma_v'}(Z\cdot\alpha(\gamma_v',\gamma_v')).$

We have thus that $\zeta\cdot K_m(v)=\zeta\cdot\big(\nabla_v\alpha\big)(v,v).$ Having in mind that $(\nabla_v\alpha)(v,v)\in N_mM,$ we conclude that, for any
$u\in\r n$ and for any $x\in U_m,$ we have
  \begin{align}
u\cdot \exp_m(x)=& u\cdot m + u\cdot x + \frac12u\cdot\alpha_m(x,x)\\
                 &-\frac16 \alpha_m(u^\top,x)\cdot\alpha_m(x,x)+\frac16 u\cdot(\nabla_x\alpha)(x,x)+O(|x|^4). \nonumber
  \end{align}
Let us put $\alpha_m^\sharp= \sum_i t_i\otimes\alpha_m(t_i,\cdot),$ where $(t_1,\dots,t_k)$ is an orthonormal basis of $T_mM,$ and take the convention that if
$z\in \r n$ and $X\in T_mM$ then
   $$
z\cdot\alpha_m^\sharp(X)= \sum_i(z\cdot t_i)\alpha_m(t_i,X)=\alpha_m(z^\top,X),\quad
\alpha_m^\sharp(X)\cdot z= \sum_i t_i\big(\alpha_m(t_i,X)\cdot z\big).
   $$
Then
   \begin{align*}
\exp_m&(x)= m + x + \frac12\alpha_m(x,x)- \frac16\alpha^\sharp_m(x)\cdot\alpha_m(x,x)+\frac16 (\nabla_x\alpha)(x,x)+O(|x|^4),\\
\gamma_v(t)&= m + vt + \frac12\alpha_m(v,v)t^2+ \frac16\left( (\nabla_v\alpha)(v,v)-\alpha^\sharp_m(v)\cdot\alpha_m(v,v)\right)t^3+O(t^4).
   \end{align*}

This gives the \emph{geodesic deviation} $\Delta_v(t)$ defined by $v$ as
   $$
\Delta_v(t) =
 \gamma_v(t)- (m+vt)=  \frac12 \alpha_m(v,v)t^2 + \frac16((\nabla_v\alpha)(v,v)-\alpha^\sharp(v)\cdot\alpha_m(v,v))t^3 +
O(t^4).
  $$

Using the same technique, it is easy to compute higher order terms of these Taylor expansions, but we shall not use them here.

The tangential and normal components of the geodesic deviation are given by
\begin{align*}
u^\top\cdot \Delta_v(t) &= -\frac16 \alpha_m(u^\top,v)\cdot\alpha_m(v,v)t^3+ O(t^4),\\
u^\bot\cdot\Delta_v(t) &=  \frac12u^\bot\cdot\alpha_m(v,v)t^2
                 +\frac16 u^\bot\cdot(\nabla_v\alpha)(v,v)t^3+O(t^4). \nonumber
  \end{align*}

We see that the term of second order of the normal deviation is $\frac12\alpha_m(v,v)t^2,$ and this gives a geometric interpretation of the second fundamental form.
We will call its coefficient in $t^2$ the \emph{frontal deviation} of $M$ in  the direction $v\in T_mM.$ In the following we will give geometric interpretations to
the terms of third order.

\section{Applications to surface geometry}

In this section, $M$ will be a regular surface immersed in $\r n.$ Since the study is local we may assume that $M$ is orientable, so that there is a well defined
rotation of 90 degrees in $T_mM$ for each $m\in M.$ It will be given by the tensor field $J.$ We will focus here in the principal term of the tangential part of the geodesic deviation which is
   $$
-\frac16\alpha^\sharp(v)\cdot\alpha(v,v) t^3.
   $$
We decompose it into two components, one in the direction of $v$ and the other one in the direction of $Jv.$

\begin{definition}
We define the \textbf{frontal (geodesic) deviation} of $M$ in the (unit) direction $v$ by
   $$
-\frac16\alpha(v)\cdot\alpha(v,v).
   $$
The other component of this deviation, called \textbf{lateral (geodesic) deviation} of $M$ in the (unit) direction $v,$ is
given by
   $$
-\frac16\alpha(Jv,v)\cdot\alpha(v,v).
   $$
\end{definition}

\subsection{Lateral geodesic deviation of a surface in one direction}

Now we are going to give an additional interpretation to the lateral deviation. Suppose that $\gamma_v''(0)\neq 0.$  We consider the curve $\ovgam(t)$ obtained by
the orthogonal projection of $\gamma_v(t)$ over the affine subspace by $m$ generated by the orthonormal vectors
$e_1=\gamma_v'(0),\; e_2=\dfrac{\gamma_v''(0)}{\Vert\gamma_v''(0)\Vert}$ and $e_3=J\gamma_v'(0).$ That projection will be given, in the affine frame
$(m;\,e_1,e_2,e_3),$  by:
  $$
\ovgam(t)=((\gamma_v(t)-m)\cdot
e_1)e_1+((\gamma_v(t)-m)\cdot e_2)e_2+((\gamma_v(t)-m)\cdot e_3)e_3.
  $$
Thus, $\ovgam'(0)=v= e_1,$ and $\ovgam''(0)= \gamma_v''(0)=\Vert\alpha(v,v)\Vert e_2,$ and
   \begin{align*}
\ovgam'''(0)\cdot e_1&= \gamma'''_v(0)\cdot e_1= -\Vert\alpha_m(v,v)\Vert^2,\\
\ovgam'''(0)\cdot e_2&= \frac{(\nabla_v\alpha)(v,v)\cdot\alpha(v,v)}{\Vert\alpha(v,v)\Vert},\\
\ovgam'''(0)\cdot e_3&=-\alpha(Jv,v)\cdot\alpha(v,v).
   \end{align*}

Therefore $\ovgam'(0)\times \ovgam''(0)= \Vert\alpha(v,v)\Vert e_3,$ hence the torsion of $\ovgam$ at $t=0$ is given by:
  \begin{align*}
\overline{\tau}=&\dfrac{(\overline{\gamma}'(0)\times\overline{\gamma}''(0))\cdot\overline{\gamma}'''(0)}{|\overline{\gamma}'(0) \times\overline{\gamma}''(0)|^2}\\
               =&-\dfrac{\alpha(Jv,v)\cdot\alpha(v,v)}{\Vert\alpha(v,v)\Vert}.
  \end{align*}

Now, the curvature of $\overline\gamma(t)$ is given by $\overline\kappa(0)=\Vert\overline\gamma''(0)\Vert=\Vert\alpha(v,v)\Vert.$ Then, the lateral deviation of $M$
in the direction of the unit vector $v\in T_mM$ is
  $$
-\frac16\alpha(Jv,v)\cdot\alpha(v,v)=\frac16{\overline\kappa}(0)\,\overline{\tau}(0),
  $$

Finally, we know that if $\kappa_v(t)$ denotes the curvature of the geodesic
$\gamma_v(t)$ then $\kappa_v(t)^2=\Vert\alpha_{\gamma_v(t)}(\gamma_v'(t),\gamma_v'(t))\Vert^2.$ Therefore,
   \begin{align*}
\kappa_v(t)&\kappa_v'(t)=\alpha_{\gamma_v(t)}(\gamma_v'(t),\gamma_v'(t))\cdot \nabla_{\gamma_v'}\big(\alpha_{\gamma_v}(\gamma_v',\gamma_v')\big)(t)\\
=&\alpha_{\gamma_v(t)}(\gamma_v'(t),\gamma_v'(t))\cdot(\nabla_{\gamma_v'}\alpha)(\gamma_v',\gamma_v')(t).
   \end{align*}
Evaluating at $t=0$ we get
   $$
\kappa_v(0) \kappa'_v(0)= \alpha(v,v)\cdot(\nabla_v\alpha)(v,v),
   $$
so that if we denote $\kappa_v= \kappa_v(0)$ and
$\kappa'_v=\kappa'_v(0),$ we have
   $$
\kappa_v\kappa'_v=\alpha(v,v)\cdot(\nabla_v\alpha)(v,v).
   $$
and it measures the geodesic ratio of change of the normal curvature
in the  direction $v.$

\subsection{Retard of a geodesic with respect to the tangent vector}

In this section we give an interpretation to the frontal deviation.

Let $t\mapsto m + tv$ be the geodesic in $\r n$ with same initial condition as $\gamma.$ We can approximate $\gamma_v$ to order two by a curve $\beta$ that describes,
 with velocity $v,$ a circle of radio $R=\frac1{\gamma_v''(0)}=\frac1{\Vert\alpha(v,v)\Vert}$ that lies on the affine plane by $m$ generated by $\gamma'(0)$ and
 $e_2=\frac{\gamma_v''(0)}{\Vert\alpha(v,v)\Vert}.$ The equation of this curve is
   $$
\beta(t)= m + R\sin \frac tR\; e_1+(R-R\cos \frac tR)e_2.
   $$
The retard of the projection of $\beta(t)$ on the tangent plane with respect to the curve $m+ tv$ is given by:
   \begin{align*}
(R\sin\frac tR - t) v+\dots&= -\frac16\frac{t^3}{R^2}+\dots\\
                     &=-\frac16 \alpha(v,v)\cdot\alpha(v,v)t^3 + \dots
   \end{align*}
This explains why the frontal deviation depends only on the second order geometry: it is a consequence of the curvature of $\gamma_v$ together with the fact that it
 is parameterized by arc-length.

\subsection{Extremal directions of the frontal geodesic deviation}

The frontal geodesic deviation of $M$ in the direction $v$ depends essentially in  the norm of the second fundamental form. Hence, the extremal directions of this
deviation are the directions where its derivative vanishes. We are going to find these directions when $M$ is a surface. To simplify calculations, we differentiate
the squared norm instead of the norm itself.

We know that
  $$
\eta(\theta)= \alpha(v,v)= H+B\cos2\theta +C\sin 2\theta,
  $$
where $v= \cos\theta \,t_1+ \sin \theta \, t_2.$ The derivative of the squared norm of $\eta(\theta)$ vanishes iff:
  $$
(H+B\cos2\theta +C\sin 2\theta)\cdot(-B\sin2\theta+ C\cos2\theta)=0.
  $$
And this is equivalent to:
  $$
-hb\sin2\theta+hc\cos2\theta+(cc-bb)\sin2\theta\cos2\theta+bc\cos^22\theta-bc\sin^22\theta=0,
  $$
where we have put $hb= H\cdot B,\; bb= B\cdot B,$ etc. Now, putting $p=\tan\theta,$ the extremal directions of the frontal deviation are given by the solutions of
the following equation:
\begin{eqnarray}\nonumber
p^4(-hc+bc)+p^3(-2cc+2bb-2hb)+ p^2(-2bc-4bc)&&\\\nonumber
+ p(-2hb+2cc-2bb)+bc+hc&=&0.
\end{eqnarray}

This equation could serve for computing numerically those directions and the corresponding lines of extremal frontal deviation.
\begin{example}
  Let $M$ be a surface immersed in $\r 5$ and $\stackrel{\rightarrow}{x}:U\subset\r 2 \to M$ be a chart defined in $M,$ $U$ be an open set, where:
\[
\begin{array}{rcll}
\stackrel{\rightarrow}{x}:&U &\longrightarrow& M \\
                      &(u,v) &\longrightarrow &\left(u^2v^2, u+v, u-v, \frac{u^2+v^2}{2}, \frac{u^2-v^2}{2}\right).

\end{array}
\]
The  figure $1$ has been made with the program \cite{AMA2}. The program draws the lines that are at each point tangent to one of the two or four directions of
extremal frontal geodesic deviation. The thick line is the discriminant curve separating the regions where there are two such directions at each point, from those
where there are four.
\begin{figure}[ht]
\begin{center}
  \includegraphics[width=3cm]{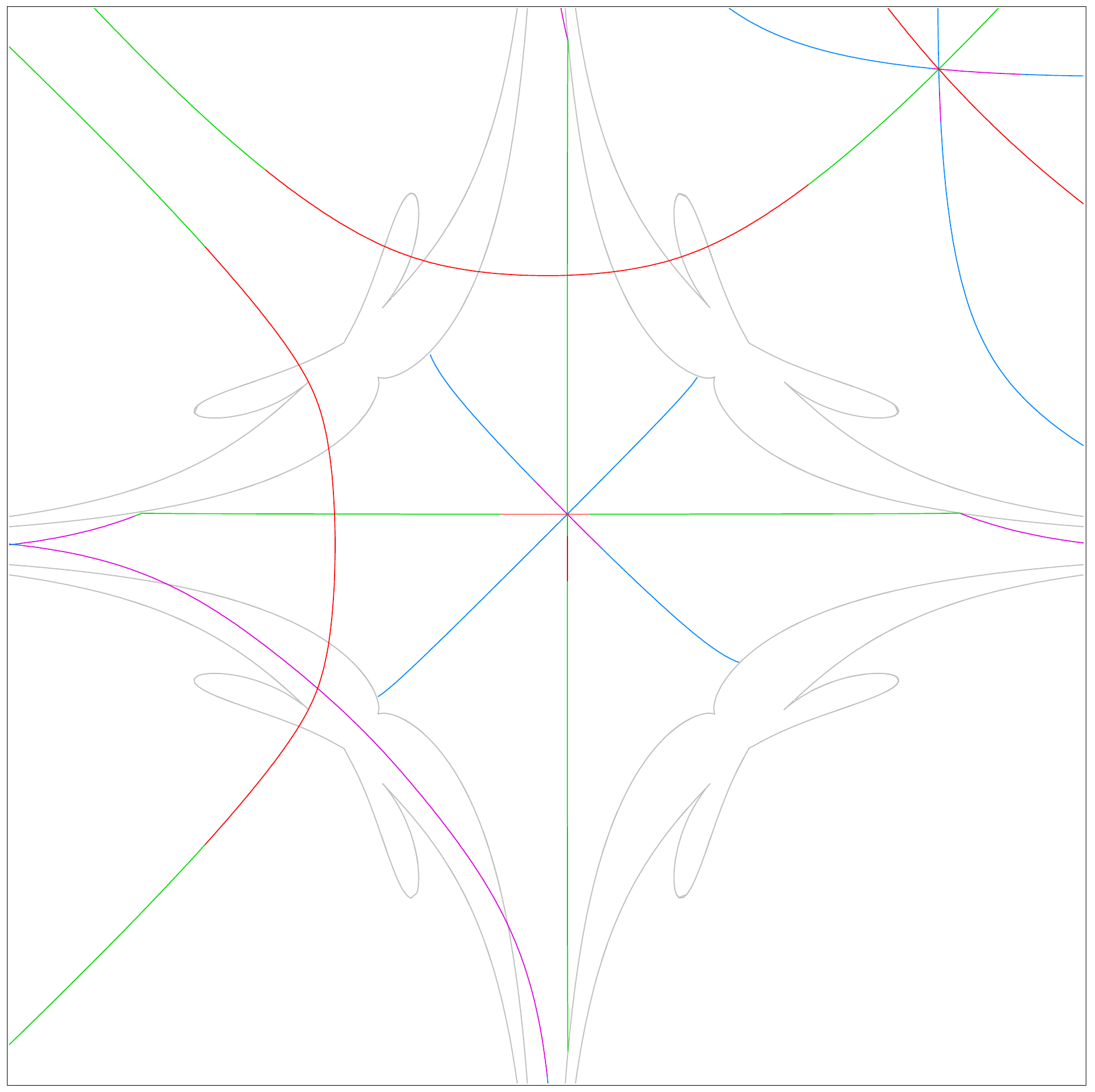}\\
  \caption{$\stackrel{\rightarrow}{x}(u,v)=\left(u^2v^2, u+v, u-v, \frac{u^2+v^2}{2}, \frac{u^2-v^2}{2}\right)$}
\end{center}
\end{figure}
\end{example}

For giving a feeling about the difficulty, if not impossibility, of making effective computations via Monge charts not known
beforehand, we recall the following. Suppose that the initial data of the problem is given in the most usual manner, that is by a
chart of the surface $S$ as $ X(u,v)= (g_1(u,v), \dots, g_5(u,v)).$ The task for obtaining, for instance, the asymptotic directions at
a \underline{single} point  $ p= X(u_0, v_0) $ begins by computing a Monge chart around $p.$ That is, we first compute the basis
$(A=X_u(u_0,v_0), B=X_v(u_0,v_0))$ of  $T_pS$ and a basis $(u_1,u_2, u_3)$ of the normal space to $S$ at $p.$ Let $(u,v)$ be a
point near $(u_0,v_0).$ The Monge parameterization defined around $(u_0,v_0)$ by the basis $(A,B,u_1,u_2,u_3)$   is such that, given
the pair $(x,y)$ near zero, there must be some pair $(u,v)$ and numbers $f_1, f_2, f_3$ near zero satisfying
    $$
X(u,v) - p =  x A + y B + f_1 u_1 + f_2 u_2 + f_3 u_3 .
    $$

The five components of this equality will give five equations for the five unknowns $u, v, f_1, f_2, f_3.$  So, we obtain functions
$f_i(x,y),\;i=1,\dots,3$ such that the map $(x,y)\to x A + y B + f_1(x,y) u_1 + f_2(x,y) u_2 + f_3(x,y) u_3$ is the desired Monge chart.

We note that while those equations are linear in the unknowns $f_1, f_2$ and $f_3,$ they may have any form in $X(u,v).$
Therefore, unless $X$ have a very simple expression, the task is hopeless. As an appendix, we offer as proof a notebook showing
that a task so simple as computing a Monge chart for a sphere in $R^3$ from the usual parameterization is too much for
Mathematica$^\circledR.$ The same occurs even for surfaces in $R^3$ given through polynomials in $u$ and $v.$  For instance, the minimal Bour surface given by
    $$
X(u,v) = \big((u^2(2- u^2 + 6v^2) - v^2(2+v^2))/4,\; uv(v^2 - u^2-1),\; 2u(u^2 - 3v^2)/3).
    $$
Of course, this is not intended as a  criticism on that manificent package for symbolic and numeric computation.

Note also that Monge charts are used mainly in the form of power series for the functions $f_1, f_2,...$, so that the trick for
obtaining geometric results only works for the point $(x,y)= (0,0).$ For instance, if we would compute the integral curves of
asymptotic directions as in (\cite{M}, \cite{MRR}), we would need to compute a Monge
chart for each point where there would be necessary to compute the asymptotic directions according with the chosen ordinary
differential equation algorithm, usually several times for each point of the curve effectively computed.

\subsection{Relation between extremal frontal and lateral geodesic directions}

Now we will find the directions $\theta$ of $T_mM$ where the values of the lateral tangent deviation coefficient $-\alpha(Jv,v)\cdot \alpha(v,v)$ are extremal. These
 directions are the directions where the derivative of $-\alpha(Jv,v)\cdot \alpha(v,v)$ vanishes. In terms of $H,B$ and $C$ we have:
    \begin{align*}
\alpha(Jv&,v)=  \alpha(t_2\cos\theta-t_1\sin\theta,t_1\cos\theta+t_2\sin\theta  ) \\
     & = -\dfrac{\sin2\theta}{2}\,b_1+ \cos2\theta \,b_3 + \dfrac{\sin2\theta}{2}\,b_2.
   \end{align*}

Since $b_1= H+B$ and $b_2=H-B,$ we have
  $$
\alpha(Jv,v)=-B\sin2\theta +C\cos2\theta = \frac12 \eta(\theta)'.
  $$
Finally:
  $$
 \begin{array}{rcl}
   \alpha(Jv,v)\cdot\alpha(v,v) & = & (-B\sin2\theta +C\cos2\theta)\cdot(H + B\cos2\theta +C\sin2\theta) \\[2mm]
     & = &-hb\sin2\theta + (cc-bb)\sin2\theta\cos2\theta-bc\sin^22\theta\\[2mm]
     && +cb\cos^22\theta + ch\cos2\theta.
 \end{array}
  $$

Note that $\alpha(Jv,v)\cdot\alpha(v,v)=\frac12\eta(\theta)\cdot \eta(\theta)'= \frac14 \big(\eta(\theta)\cdot\eta(\theta)\big)'.$ In other words, the lateral
deviation is proportional to the derivative of the squared norm of the frontal deviation. With this, we have proved the following proposition.

\begin{proposition}
The directions where the lateral deviation vanishes are the extremal directions of the frontal geodesic deviation.
\end{proposition}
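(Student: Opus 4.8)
\section*{Proof proposal}

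The plan is to deduce the proposition directly from the algebraic identity established just above the statement, namely
\[
\alpha(Jv,v)\cdot\alpha(v,v)=\tfrac12\,\eta(\theta)\cdot\eta(\theta)'=\tfrac14\big(\eta(\theta)\cdot\eta(\theta)\big)',
\]
which in turn rests on the fact that $\alpha(Jv,v)=-B\sin2\theta+C\cos2\theta=\tfrac12\eta'(\theta)$. First I would recall the two notions being compared, expressed in terms of the angular parameter $\theta$ on $S^1(T_mM)$: the lateral geodesic deviation in the unit direction $v=v(\theta)$ is $-\tfrac16\,\alpha(Jv,v)\cdot\alpha(v,v)$, so it vanishes precisely when $\alpha(Jv,v)\cdot\alpha(v,v)=0$; and the frontal geodesic deviation has magnitude a fixed positive multiple of $\|\alpha(v,v)\|=\|\eta(\theta)\|$, so (as already worked out in the subsection on extremal directions of the frontal deviation, where one differentiates the squared norm rather than the norm) its extremal directions are exactly the $\theta$ for which $\big(\eta(\theta)\cdot\eta(\theta)\big)'=0$.

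With these two reformulations in hand, the conclusion is immediate from the identity: $\theta$ gives a zero of the lateral deviation $\iff \alpha(Jv,v)\cdot\alpha(v,v)=0 \iff \big(\eta(\theta)\cdot\eta(\theta)\big)'=0 \iff \theta$ is an extremal direction of the frontal geodesic deviation. I would phrase this as the single chain of equivalences and note that it identifies the solution sets of the corresponding equations in $p=\tan\theta$ (the quartic displayed earlier being exactly the vanishing locus of the lateral coefficient).

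I do not expect a genuine obstacle here, since the substantive computation — evaluating $\alpha(Jv,v)$ in the orthonormal frame, recognizing it as $\tfrac12\eta'$, and identifying the inner product $\alpha(Jv,v)\cdot\alpha(v,v)$ with one quarter of the derivative of $\|\eta\|^2$ — has already been carried out before the statement. The only point requiring a word of care is differentiability of $\theta\mapsto\|\eta(\theta)\|$ at directions where $\eta(\theta)=0$ (inflection-type directions of the normal section): there the norm need not be smooth, but at such a $\theta$ both the lateral deviation and $\big(\|\eta\|^2\big)'$ vanish automatically, so the equivalence still holds; equivalently, one works with the everywhere-smooth function $\|\eta(\theta)\|^2$ throughout, which is precisely the convention adopted when the extremal directions of the frontal deviation were defined. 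Hence the proposition follows.
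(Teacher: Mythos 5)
Your proof is correct and follows exactly the paper's own argument: both rest on the identity $\alpha(Jv,v)=\tfrac12\eta'(\theta)$, hence $\alpha(Jv,v)\cdot\alpha(v,v)=\tfrac14\big(\eta\cdot\eta\big)'$, which identifies zeros of the lateral deviation with critical points of $\|\eta\|^2$ and thus with the extremal directions of the frontal deviation. The only minor slip is calling the frontal deviation a multiple of $\|\eta(\theta)\|$ rather than of $\|\eta(\theta)\|^2$, but since you (like the paper) locate its extremal directions via $\big(\|\eta\|^2\big)'=0$, this does not affect the conclusion.
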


%
%

Notice that by using the  expression $0=\frac12\eta(\theta)\cdot \eta(\theta)'$ we characterize the extremal frontal geodesic directions as the tangent directions where the distance of the ellipse to the origin is extremal. In other words, a tangent direction is an extremal frontal direction when the corresponding point of the ellipse belongs to a hypersphere of $N_mM$ centered at the origin and tangent to the ellipse at this point. This guarantees the existence of at least 2 extremal directions.

\begin{remark}
Let $M$ be a $k$-dimensional submanifold in $\r n.$ For a given point $m\in M$ and a given unit vector $v\in T_mM$ there exists a unique geodesic $\gamma:I\to M$
with $\gamma(0)=m$ and $\gamma'(0)=v$ and a unique normal section $\beta:I\to M$ associated to $m$ and $v.$ Then $\gamma'(0)=\beta'(0)=v$ and we know that
$\gamma''(0)=\beta''(0)=\alpha(v,v).$ On the other hand, we say that two regular curves ${\gamma},\,{\beta}$ with a point in common
${\gamma}(t_0)= {\beta}(t_0)=m,$ have a {\bf contact of order $\mathbf{k}$} in $m$ iff there exists a parametrization of the curves where the first $k-1$
derivatives coincides in that point, that is:
\begin{center}
\begin{tabular}{rcl}
$\gamma^{(i)}(t_0)$ & = & $\beta^{(i)}(t_0),\quad i=1,\dots, k-1,$ \\
$\gamma^{(k)}(t_0) $& $\neq$ & $\beta^{(k)}(t_0).$
\end{tabular}
\end{center}

Then we observe that the contact between the geodesic $\gamma$ and the normal section $\beta$ is at least of order $2.$ In (\cite{Chen}) it is proved that the
contact between $\gamma$ and $\beta$ is at least of order 3, that is, $\gamma^{'''}(0)=\beta^{'''}(0)$ if and only if $\alpha(v,Jv) \cdot \alpha(v,v)=0.$ Then the
contact between $\gamma$ and $\beta$ is at least of order 3 on a surface $M$ in $\r m$ in and only if $v$ is an extremal frontal separation direction.
\end{remark}

\subsection{Extremal directions in $\r 3$}
Now suppose that $M$ be a surface immersed in $\r 3.$ In this case the curvature ellipse is reduced to a segment. Then there exist $ a,b,c\in \r{}$ such that
$H=hN,$ $B=bN$ and $C=cN,$ where $N$ is the unit normal of $M.$ Hence:
  $$
\alpha(t,t) =  (h+b\cos2\theta+ c\sin2\theta) N,
  $$
where $t=t_1\cos\theta+t_2\sin\theta.$

In the case of frontal separation, the extremal directions are those that make the derivative of the squared norm of the second fundamental form to vanish. Then, in
 this case:
  $$
4(h+b\cos2\theta +c\sin 2\theta)(-b\sin2\theta+ c\cos2\theta)=0.
  $$

We have two possibilities:
\begin{itemize}
  \item[1)]$h+b\cos2\theta +c\sin 2\theta=0,$ then $\theta$ is a asymptotic direction.
  \item[2)]$-b\sin2\theta+ c\cos2\theta=0,$ then $\theta$ is a principal direction.
\end{itemize}
In the case of the lateral deviation, the  extremal directions are given by the equation:
  $$
-hb\cos2\theta+ (cc-bb)(\cos^22\theta -\sin^22\theta)-4bc\cos2\theta\sin2\theta-hc\sin2\theta=0.
  $$

Another expression of this can be obtained as follows. Let $k_1,k_2$ be the principal curvatures. The Euler formula says that the normal curvature of $M$ at $m$ in
the direction determined by $\theta$ is given by  $k_n(\theta)=k_1\cos^2\theta + k_2\sin^2\theta.$ Then $k'_n(\theta)=(k_2-k_1)\sin2\theta.$

In this case, we study the directions where the derivative of $k_n\, k'_n=0$ vanishes. We know that
$k_n\, k_n'  =  (k_1\cos^2\theta + k_2\sin^2\theta)(k_2-k_1)\sin2\theta.$ Differentiating this equation we have:
  $$
(k_2-k_1)(2k_1\cos2\theta\cos^2\theta+2k_2\cos2\theta\sin^2\theta+ (k_2-k_1)\sin^22\theta)=0.
  $$
Now, putting $p=\tan \theta,$ this is
  $$
(k_2-k_1)(-k_2p^4+ 3p^2(k_2-k_1)+ k_1)= 0.
  $$
Solving this equation at a non-umbilic point, the normal curvatures of the extremal directions of the lateral deviation are given by:
  $$
k_n=k_1\cos^2\theta+ k_2\sin^2\theta= \dfrac{k_2k_1-3k^2_2\pm k_2\sqrt{9(k_2^2+ k_1^2)-14k_1k_2}}{3k_1-5k_2\pm\sqrt{9(k_2^2+ k_1^2)-14k_1k_2 }}.
  $$

One may verify from these values that the extremal lateral deviation directions are different from all of the special directions on surfaces that we know of, namely
asymptotic, principal, arithmetic and geometric mean \cite{GS} or characteristic (harmonic mean) \cite{T}.

\begin{remark}
Suppose that $M$ is a minimal surface immersed in $\r 4.$ In this case we know that $H=0,$ then the extremal frontal deviation lines coincides with the lines of
axial curvature defined in \cite{RG-JS}.
\end{remark}

\subsection{Normal curvature and torsion}
In this section, we will show how the Taylor expansion of the exponential map allows us to obtain easily an intrinsic expression for the normal torsion of a surface
in $\r 4$ in a tangent direction.

The definition of normal torsion at a point along one direction was given by W. Fessler in \cite{W}. Let $M$ be a surface immersed in $\r 4,$ $m\in M,$ and
$0\ne v\in T_mM.$ Consider the affine subspace of $\r 4$ which passes by $m$ and is generated by $v$ and $N_mM.$ The intersection of this subspace with $M$ is a
curve that passes by $m,$ called the \it normal section \rm of $M$ determined by $v.$ The curvature and torsion of this curve, as a curve in that Euclidean affine
3-space, is the \it normal curvature \rm and \it normal torsion \rm of the surface $M$ in the direction $v,$ respectively.

The inverse image by $\exp_m$ of the normal section of $M$ in the direction given by the unit vector $v\in T_mM$ is a curve in $T_mM$ whose Taylor expansion may be
written as  $\beta(t)=v\, t+ \frac{1}{2}a\, Jvt^2+ \frac{1}{6}c\,t^3+\ldots,$ where $a\in\r{}\;,c\in T_mM,$ and $v\cdot v=1.$

We have:
  \begin{align*}
(\exp(\beta(t))-m)\cdot Jv&=Jv\cdot \beta(t)-\frac{1}{6}\alpha(Jv,\beta(t))\cdot\alpha(\beta(t),\beta(t))+\ldots\\
                          &=\frac{1}{2}at^2+\frac{1}{6}(Jv\cdot c)t^3 -\frac{1}{6}\alpha_m(Jv,v)\cdot \alpha_m(v,v)t^3+O(t^4).
  \end{align*}
Now since $\exp(\beta(t))$ is a normal section, $\exp(\beta(t))-m$ will belong to the subspace $<v,N_mM>.$ Hence $(\exp(\beta(t))-m)\cdot Jv=0$ and this implies that
 $a=0$ and $Jv\cdot c= \alpha_m(Jv,v)\cdot \alpha_m(v,v).$ Therefore
   $$
\beta(t)= vt+\frac16\big((c\cdot v) v + \alpha_m(Jv,v)\cdot\alpha_m(v,v)Jv\big)t^3+O(t^4).
   $$
We put $\mu(t)$ to denote the terms up to the third order in $t$  of $\exp(\beta(t))-m.$ We compute the component of $\mu(t)$ along $v$
   \begin{align*}
v\cdot \mu(t)=&v\cdot\beta(t)-\frac16\alpha_m(v,\beta(t))\cdot\alpha_m(\beta(t),
\beta(t))\\
&= t+\frac16\big(v\cdot c - \alpha_m(v,v)\cdot\alpha_m(v,
v)\big) t^3
   \end{align*}

As for the normal component of $\mu(t)$, it is given by
   $$
  \mu(t)^\bot=\frac12 \alpha_m(v,v)t^2+\frac16(\nabla_{v}\alpha)(v,v) t^3.
   $$

In the following, the formulas for $\mu$ and its derivatives will have two components; the first one is the tangential component in the direction $v$ (the tangential
 component in the direction $Jv$ is zero); the second is the normal part which belongs to $N_mM.$
  \begin{align*}
\mu(t)&=\Big(t+\frac16\big(v\cdot c - \Vert\alpha_m(v,v)\Vert^2\big) t^3\;, \;
\frac12 \alpha_m(v,v)t^2+\frac16(\nabla_{v}\alpha)(v,v) t^3\Big),\\
\mu'(t)&=\Big(1+\frac12\big(v\cdot c -\Vert\alpha_m(v,v)\Vert^2\big)t^2\;, \;\alpha_m(v,v)t+\frac{1}{2}(\nabla_v\alpha)(v,v) t^2\Big),\\
\mu''(t)&=\big((v\cdot c -\Vert\alpha_m(v,v)\Vert^2)t\;,\;\;\alpha_m(v,v)+(\nabla_v\alpha)(v,v) t\big),\\
\mu'''(t)&=\big(v\cdot c -\Vert\alpha_m(v,v)\Vert^2\;,\;\;(\nabla_v\alpha)(v,v)\big).
  \end{align*}
We evaluate the last three formulas at $t=0,$ and get
   $$
\mu'(0)=(1,0),\;\; \mu''(0)= (0,\alpha_m(v,v)),\;\; \mu'''(0)=\big(v\cdot c -\Vert\alpha_m(v,v)\Vert^2,\;(\nabla_v\alpha)(v,v)\big).
   $$

Now it is easy to show that $\mu'(0)\times\mu''(0)=J\alpha(v,v)$ from which we have
   $$
(\mu'(0)\times\mu''(0))\cdot\mu'''(0)=J\alpha(v,v)\cdot (\nabla_v\alpha)(v,v).
   $$
Therefore the normal torsion of $M$ at $m$ in the direction $v\in T_mM$ is given by:
  \begin{align*}
\tau_v &= \frac{J\alpha(v,v)\cdot
(\nabla_v\alpha)(v,v)}{\alpha(v,v)\cdot
\alpha(v,v)}=\frac{J\gamma''_v\cdot\gamma'''_v}{\gamma''_v\cdot
\gamma''_v}(0).
  \end{align*}
where in the last formula $\gamma_v$ is the geodesic with initial
condition $v.$

The normal curvature in the same direction is $\kappa_v=\Vert \mu''(0)\Vert= \Vert\alpha_m(v,v)\Vert.$

\section{Applications to contact theory}

\subsection{Directions of high contact with $3$-spheres in $\r 4$}

Let $M$ be a surface immersed in $\r 4,\; m\in M$ and $0 \neq u \in\r 4.$  We will denote by $d_{3,u}$ the third order approximation of the function
$f: T_mM\to \r{}$ defined as $f(x) = h(x) - h(0),$ where $h(x) = d(exp_m(x), m+u)^2,$ that is

  \begin{align*}
d_{3,u}(x)=&-2u\cdot x+x\cdot x-u\cdot \alpha(x,x)+\frac13\alpha(u^\top,x)\cdot\alpha(x,x)\\
           &-\frac13 u^\bot\cdot(\nabla_x\alpha)(x,x),
  \end{align*}
where, for brevity, we have put $\alpha(x,x)$ instead of $\alpha_m(x,x).$

From definition \ref{2.2} it is known that $u$ determines a \it rib direction \rm  at $m$ if and only if the following conditions are true:
\begin{itemize}
\item [(i)] $u\in N_mM.$
\item [(ii)] There is some $x\in T_mM,\; x\ne 0,$ such that $g(x,\cdot)-u\cdot\alpha(x,\cdot)=0.$
\item[(iii)] $d_{3,u}(x)=0.$
\end{itemize}

This vector $x$ defines a \it strong principal direction \rm at $m$ i.e. a direction of at least $A_k$ contact, $k \geq 3,$ with the
corresponding focal hypersphere, \cite{P1}.

\begin{theorem}
If a vector $0\ne x\in T_mM$ defines a strong principal direction then it satisfies the following conditions:
   \begin{enumerate}
\item $\alpha(x,x) \ne 0.$
\item $J\alpha(x,x)\cdot\alpha(x,Jx)\ne 0\quad$ or $\quad \alpha(x,Jx)=0.$
\item $\det\big(\alpha(x,Jx),(\nabla_x\alpha)(x,x)\big)=0,$
   \end{enumerate}
where the determinant is meaningful because both vectors belong to $N_mM,$ whose dimension is two.
\end{theorem}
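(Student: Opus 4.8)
The strategy is to extract the three conditions directly from the characterization (i)--(iii) of a strong principal direction, by examining the vanishing of the appropriate gradient and then of $d_{3,u}(x)$ itself. Since $u \in N_mM$ by (i), we write $u^\top = 0$, so the cubic $d_{3,u}$ simplifies to $d_{3,u}(x) = -2u\cdot x + x\cdot x - u\cdot\alpha(x,x) - \tfrac13 u\cdot(\nabla_x\alpha)(x,x)$, and condition (ii) reads $g(x,\cdot) = u\cdot\alpha(x,\cdot)$ as a linear form on $T_mM$. I would first record what (ii) says when paired with $x$ and with $Jx$: pairing with $x$ gives $\Vert x\Vert^2 = u\cdot\alpha(x,x)$, and pairing with $Jx$ gives $0 = u\cdot\alpha(x,Jx)$ (using $g(x,Jx)=0$). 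These two scalar identities, together with $u\in N_mM$, pin down $u$ in terms of $\alpha(x,x)$ and $\alpha(x,Jx)$.

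Condition 1 comes first: if $\alpha(x,x) = 0$, then the identity $\Vert x\Vert^2 = u\cdot\alpha(x,x) = 0$ forces $x = 0$, contradicting $x\ne 0$. So $\alpha(x,x)\ne 0$. For condition 2, observe that $N_mM$ is two-dimensional with orthonormal-up-to-scale ``basis'' given by $\alpha(x,x)$ and $J\alpha(x,x)$ (the latter using the rotation $J$, which I must be careful is the one on $N_mM$ here, obtained as in the normal-torsion section). The relation $0 = u\cdot\alpha(x,Jx)$ says $u$ is orthogonal to $\alpha(x,Jx)$; also $u\cdot\alpha(x,x) = \Vert x\Vert^2 \ne 0$ says $u$ is \emph{not} orthogonal to $\alpha(x,x)$. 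Expanding $u = a\,\alpha(x,x) + b\, J\alpha(x,x)$ (possible since $\alpha(x,x)\ne 0$), the first condition gives a linear relation between $a,b$ and $\alpha(x,Jx)\cdot\alpha(x,x)$, $\alpha(x,Jx)\cdot J\alpha(x,x)$. Writing out $\alpha(x,Jx) = p\,\alpha(x,x) + q\,J\alpha(x,x)$, one finds that the orthogonality $u\cdot\alpha(x,Jx)=0$ combined with $u\cdot\alpha(x,x)\ne 0$ forces either $q\ne 0$ (equivalently $J\alpha(x,x)\cdot\alpha(x,Jx)\ne 0$) or $\alpha(x,Jx) = 0$ outright (the case $q = 0$, which then also forces $p = 0$ for consistency with $b$ existing). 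That is precisely condition 2.

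Condition 3 is the substantive one and uses (iii). Having solved for $u$ from (ii) — explicitly $u = \dfrac{\Vert x\Vert^2}{\Vert\alpha(x,x)\Vert^2}\alpha(x,x)$ in the generic case $\alpha(x,Jx)=0$, or the appropriate combination otherwise — I substitute into $d_{3,u}(x) = 0$. The quadratic part $-2u\cdot x + x\cdot x - u\cdot\alpha(x,x)$ vanishes automatically: $u\cdot x = 0$ since $u\in N_mM$, and $x\cdot x = u\cdot\alpha(x,x)$ by (ii) paired with $x$. So (iii) reduces to $u\cdot(\nabla_x\alpha)(x,x) = 0$, i.e.\ $\alpha(x,x)\cdot(\nabla_x\alpha)(x,x) = 0$ in the generic case. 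The claim is that this, together with condition 2, is equivalent to $\det\big(\alpha(x,Jx),(\nabla_x\alpha)(x,x)\big) = 0$. In the case $\alpha(x,Jx)=0$ the determinant vanishes trivially, and one must check this is consistent — here I expect the argument to need the additional input $J\alpha(x,x)\cdot\alpha(x,Jx)\ne 0$ (condition 2's first alternative) to identify the span of $\alpha(x,Jx)$ with the line orthogonal to $\alpha(x,x)$ inside $N_mM$, so that ``$u$ orthogonal to $(\nabla_x\alpha)(x,x)$'' and ``$\alpha(x,Jx)$ parallel to $(\nabla_x\alpha)(x,x)$'' coincide; when $\alpha(x,Jx)=0$ the determinant condition is vacuous and automatically satisfied.

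**Main obstacle.** The delicate point is the bookkeeping in the two-dimensional normal space $N_mM$: tracking which vector is orthogonal to which, keeping the two cases of condition 2 ($\alpha(x,Jx)=0$ versus $J\alpha(x,x)\cdot\alpha(x,Jx)\ne 0$) cleanly separated, and verifying that in each case $d_{3,u}(x)=0$ translates into exactly the stated determinant condition rather than something weaker or stronger. In particular one must confirm that, after substituting $u$, no residual contribution from $\alpha(x,Jx)$ survives in $u\cdot(\nabla_x\alpha)(x,x)$ that would be missed by naively writing $u \parallel \alpha(x,x)$ — this is where condition 2 is genuinely used, not merely stated. The rest (conditions 1 and the easy direction of 2) is a short computation from the pairings of (ii).
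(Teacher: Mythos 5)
Your proposal follows essentially the same route as the paper: record what condition (ii) gives when paired with $x$ and with $Jx$, deduce $\alpha(x,x)\neq 0$, expand $u$ in the basis $(\alpha(x,x),J\alpha(x,x))$ of $N_mM$ to derive condition 2, and then use (iii) for condition 3. So the overall approach is correct and matches the paper.

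That said, your handling of condition 3 is needlessly complicated and you leave it in a tentative, half-verified state, whereas the paper's argument is both shorter and does not need any case split. The key observation is: once you know $u\ne 0$, $u\in N_mM$, $u\perp\alpha(x,Jx)$ (from (ii) paired with $Jx$), and $u\perp(\nabla_x\alpha)(x,x)$ (because (i) and (ii) already kill the quadratic part of $d_{3,u}(x)$, so (iii) reduces to $u\cdot(\nabla_x\alpha)(x,x)=0$ — which you do note), then in the \emph{two}-dimensional space $N_mM$ the orthogonal complement of the nonzero $u$ is a line, so $\alpha(x,Jx)$ and $(\nabla_x\alpha)(x,x)$ both lie on that line and are linearly dependent. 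That gives $\det\big(\alpha(x,Jx),(\nabla_x\alpha)(x,x)\big)=0$ directly, with no need to substitute an explicit formula for $u$, no need to separate the cases of condition 2, and no worry about "residual contributions from $\alpha(x,Jx)$." Your phrase identifying "the span of $\alpha(x,Jx)$ with the line orthogonal to $\alpha(x,x)$" is off: the relevant line is the orthogonal complement of $u$ in $N_mM$, not of $\alpha(x,x)$. Two further small slips: you call $\alpha(x,Jx)=0$ the "generic case," but it is the exceptional one (generically $J\alpha(x,x)\cdot\alpha(x,Jx)\ne 0$); and in your condition-2 discussion, the statement "$q=0$ forces $p=0$ for consistency with $b$ existing" should instead read that $q=0$ forces $p=0$ because $a\ne 0$ in the relation $0=a(\alpha(x,x)\cdot\alpha(x,Jx))+b(J\alpha(x,x)\cdot\alpha(x,Jx))$.
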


\begin{proof}
Assume that $0\ne x\in T_mM$ defines a strong principal direction. Then there exists a rib direction $u \in \r4$ satisfying properties
(i)-(iii). Condition (i) says that $u^\top= 0.$ Since $(x,Jx)$ is a basis of $T_mM$ condition (ii) is equivalent to the following two conditions
   $$
 x\cdot x= u\cdot\alpha(x,x),\qquad u\cdot\alpha(x,Jx)=0.
   $$
Since $x\ne 0,$ the first one requires that $\alpha(x,x)\ne 0.$ Therefore we can put
   $$
u= p\alpha(x,x) + q J\alpha(x,x)
   $$
for some $p,q\in \r{}.$ Then $u\cdot\alpha(x,x)= x\cdot x= p\Vert \alpha(x,x)\Vert^2,$ that is
   $$
p=\frac{x\cdot x}{\Vert \alpha(x,x)\Vert^2},
   $$
and
   $$
u\cdot \alpha(x,Jx)=0=\frac{(x\cdot x)\alpha(x,x)\cdot \alpha(x,Jx)}{{\Vert \alpha(x,x)\Vert^2}}+  q J\alpha(x,x)\cdot \alpha(x,Jx).
   $$
Hence, if $J\alpha(x,x)\cdot \alpha(x,Jx)\ne 0$ we can solve this for $q.$ Otherwise we must have
   $$
J\alpha(x,x)\cdot \alpha(x,Jx)= \alpha(x,x)\cdot \alpha(x,Jx)=0,
   $$
but since $\alpha(x,x)\ne 0$ and $J\alpha(x,x)\ne 0$ we conclude that $\alpha(x,Jx)=0.$ So, in any case condition \it 2 \rm is satisfied.

Also, if (i) and (ii) are satisfied, then $d_{3,u}(x)= -\frac13 u\cdot(\nabla_x\alpha)(x,x)$ and this must be zero.
Therefore, the non-zero vector $u\in N_mM$ must be orthogonal to
$\alpha(x,Jx)$ and $(\nabla_x\alpha)(x,x)\in N_mM.$ Since $\dim N_mM= 2,$ we conclude that these two vectors must be linearly dependent, i.e.
  $$
\det\big(\alpha(x,Jx),(\nabla_x\alpha)(x,x)\big)=0.
  $$
and this is condition \it 3. \rm
\end{proof}\hskip 6cm $\square$

Condition \it 3 \rm leads to an equation of degree $5$ which generically gives at most $5$ strong principal directions. That equation was first obtained by
M. Montaldi in \cite{M}, but note that he uses a Monge chart and his equations are opaque in the sense that they are not given in terms geometrically recognizable.
Conversely, we  have

\begin{theorem}
If a vector $x\in T_mM$ satisfies the following conditions:
   \begin{enumerate}
\item $\alpha(x,x) \ne 0,\quad J\alpha(x,x)\cdot\alpha(x,Jx)\ne 0,\quad
  \det\big(\alpha(x,Jx),(\nabla_x\alpha)(x,x)\big)=0,$

or

\item $\alpha(x,x) \ne 0,\quad \alpha(x,Jx)= 0,\quad \{\alpha(x,x)\cdot(\nabla_x\alpha)(x,x)=0$ or $J\alpha(x,x)\cdot (\nabla_x\alpha)(x,x)\ne 0\}.$

\end{enumerate}
then it defines a strong principal direction.
\end{theorem}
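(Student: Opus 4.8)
The plan is to reverse the chain of implications established in the previous theorem. Suppose $x$ satisfies hypothesis 1 or hypothesis 2; I must produce a vector $u\in\r4$ which is a rib direction with $x$ as its strong principal direction, i.e.\ which verifies conditions (i)--(iii) of Definition \ref{2.2}. Since $\alpha(x,x)\ne 0$ in either case, the vectors $\alpha(x,x)$ and $J\alpha(x,x)$ form an orthogonal basis of $N_mM$, so I can look for $u$ in the form $u=p\,\alpha(x,x)+q\,J\alpha(x,x)$, which automatically forces condition (i) that $u\in N_mM$. Condition (ii), as shown in the proof of the preceding theorem, is equivalent to the two scalar equations $x\cdot x=u\cdot\alpha(x,x)$ and $u\cdot\alpha(x,Jx)=0$. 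The first one fixes $p=\dfrac{x\cdot x}{\Vert\alpha(x,x)\Vert^2}$ (note $p\ne 0$), and the second becomes $p\,\alpha(x,x)\cdot\alpha(x,Jx)+q\,J\alpha(x,x)\cdot\alpha(x,Jx)=0$.

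In case 1, the hypothesis $J\alpha(x,x)\cdot\alpha(x,Jx)\ne 0$ lets me solve uniquely for $q$, producing a vector $u\ne 0$ (since $p\ne0$) satisfying (i) and (ii). Then, exactly as in the previous proof, $d_{3,u}(x)=-\tfrac13\,u\cdot(\nabla_x\alpha)(x,x)$, so condition (iii) holds iff $u\perp(\nabla_x\alpha)(x,x)$. Now $u$, $\alpha(x,Jx)$ and $(\nabla_x\alpha)(x,x)$ all lie in the $2$-dimensional space $N_mM$; since $u\perp\alpha(x,Jx)$ by (ii), and the determinant hypothesis $\det(\alpha(x,Jx),(\nabla_x\alpha)(x,x))=0$ says the latter two are linearly dependent, I must split according to whether $\alpha(x,Jx)=0$. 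If $\alpha(x,Jx)\ne0$, then $(\nabla_x\alpha)(x,x)$ is a scalar multiple of $\alpha(x,Jx)$, hence also orthogonal to $u$, giving (iii); if $\alpha(x,Jx)=0$ then $q$ was free, and I simply choose $q$ so that $u\perp(\nabla_x\alpha)(x,x)$ — this is always solvable unless $(\nabla_x\alpha)(x,x)$ is itself a nonzero multiple of $\alpha(x,x)$, a subcase I treat together with case 2 below.

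In case 2 we have $\alpha(x,Jx)=0$, so condition (ii) reduces to the single equation $x\cdot x=p\Vert\alpha(x,x)\Vert^2$, again fixing $p$ while leaving $q$ free; any such $u=p\alpha(x,x)+qJ\alpha(x,x)$ satisfies (i) and (ii). Condition (iii) again reads $u\perp(\nabla_x\alpha)(x,x)$, i.e.\ $p\,\alpha(x,x)\cdot(\nabla_x\alpha)(x,x)+q\,J\alpha(x,x)\cdot(\nabla_x\alpha)(x,x)=0$. If $J\alpha(x,x)\cdot(\nabla_x\alpha)(x,x)\ne0$ this is solvable for $q$; if instead $\alpha(x,x)\cdot(\nabla_x\alpha)(x,x)=0$ then, since $p\ne0$, the equation is satisfied for \emph{every} $q$ — in particular for $q\ne0$, giving a nonzero $u$. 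These are precisely the two alternatives in the brace of hypothesis 2, so in every case we obtain a rib direction $u$ witnessing that $x$ is a strong principal direction.

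The only delicate point — the ``hard part'' — is bookkeeping the degenerate configurations so that the constructed $u$ is genuinely nonzero and no division by zero slips in: specifically, the subcase of case 1 where $\alpha(x,Jx)=0$ but $(\nabla_x\alpha)(x,x)\parallel\alpha(x,x)$, and the subcase of case 2 where both $\alpha(x,x)\cdot(\nabla_x\alpha)(x,x)\ne0$ and $J\alpha(x,x)\cdot(\nabla_x\alpha)(x,x)=0$; the former is excluded because it forces $(\nabla_x\alpha)(x,x)\cdot J\alpha(x,x)=0$ making the determinant condition vacuous rather than restrictive, and the latter is exactly what the disjunction in hypothesis 2 is designed to rule out. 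Once these are handled, one checks as before that conditions (i)--(iii) are exactly Definition \ref{2.2}'s requirements for $u$ to be a rib direction and $x$ its strong principal direction, completing the converse.
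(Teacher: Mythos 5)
Your proof takes essentially the same route as the paper's: write $u = p\,\alpha(x,x) + q\,J\alpha(x,x)$, extract $p$ from condition (ii), and then choose $q$ (or exploit its freedom) so that condition (iii), namely $u\perp(\nabla_x\alpha)(x,x)$, also holds. Two small inaccuracies worth flagging: first, the subcase inside case~1 where $\alpha(x,Jx)=0$ cannot actually occur, since the case~1 hypothesis $J\alpha(x,x)\cdot\alpha(x,Jx)\ne 0$ already forces $\alpha(x,Jx)\ne 0$, so the worry you raise about it in your last paragraph is moot. Second, in case~2 when $\alpha(x,x)\cdot(\nabla_x\alpha)(x,x)=0$ you assert that the orthogonality equation holds for every $q$; that is only true if in addition $J\alpha(x,x)\cdot(\nabla_x\alpha)(x,x)=0$, and in general only $q=0$ works --- which is precisely the choice the paper makes, and it still yields a nonzero $u=p\,\alpha(x,x)$, so the conclusion survives even though the intermediate claim is overstated.
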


\begin{proof}
Suppose that $x$ satisfies \it 1. \rm Then, as we have seen, there is a non-vanishing vector $u$ that satisfies (i) and (ii). But then $u\cdot\alpha(x,Jx)=0,$ from
which we conclude that $u$ is orthogonal to  $(\nabla_x\alpha)(x,x),$ because by the second and third conditions this vector is a multiple of $\alpha(x,Jx)\ne 0$ and
 this leads to (iii).

Now, suppose that $x$ satisfies \it 2. \rm Then, for any value of $r\in\r{}$ we have that
   $$
u=\frac{x\cdot x}{\Vert \alpha(x,x)\Vert^2}\alpha(x,x)+ r\,J\alpha(x,x)
   $$
satisfies (i) and (ii). The condition (iii) is now
   $$
\left(\frac{x\cdot x}{\Vert \alpha(x,x)\Vert^2}\alpha(x,x)+ r\,J\alpha(x,x)\right)\cdot (\nabla_x\alpha)(x,x)=0
   $$
If $\alpha(x,x)\cdot(\nabla_x\alpha)(x,x)=0$ then the choice $r=0$ solves the existence of the needed vector $u.$ If
$J\alpha(x,x)\cdot (\nabla_x\alpha)(x,x)\ne 0,$ then we can solve the equation for $r$ and find again the vector $u.$
\end{proof} \hskip 0.5cm $\square$

The program \cite{AMA1} can show the strong principal directions and curves.

Now we are going to show the manner in which the more complicate condition, namely condition \it 3 \rm of Proposition 4.1  may be computed. First of all, it is clear
 that
   $$
\det\big(\alpha(x,Jx),(\nabla_x\alpha)(x,x)\big)= \det\big(t_1,t_2,\alpha(x,Jx),(\nabla_x\alpha)(x,x)\big),
   $$
where the last determinant assumes that the vectors are in $\r4.$
Now, assuming that in the following $x$ denotes an extension of $x$ in a neighborhood of $m$, we will have
   \begin{align*}
(\nabla_x\alpha)&(x,x)= \Big(D_x\big(\alpha(x,x)\big)\Big)^\bot- 2\alpha(\nabla_xx,x)\\
&=(D_x\alpha)(x,x)^\bot + 2\alpha(D_xx-\nabla_x x,x)= (D_x\alpha)(x,x)^\bot.
   \end{align*}
Thus the condition becomes
   $$
\det\big(t_1,t_2,\alpha(x,Jx),(D_x\alpha)(x,x)\big)=0,
   $$
because the tangent component of $(D_x\alpha)(x,x)$ is canceled by the presence of the tangent basis $(t_1,t_2)$ in the determinant.
Now, if we put $x= \cos\theta t_1+ \sin\theta t_2$ and denote
   $$
q = t_2\cdot D_{t_1}t_1,\qquad r= t_2\cdot D_{t_2}t_1.
   $$
we will have
   \begin{align*}
(D_x\alpha)&(t_1,t_1)= D_xb_1- 2\alpha(D_xt_1,t_1)=D_xb_1-2(t_2\cdot D_xt_1)b_3\\
=&\cos\theta(D_{t_1}b_1-2qC)+\sin\theta(D_{t_2}b_1-2rC),
   \end{align*}
because $t_1\cdot D_xt_1=0.$ In the same manner we obtain
   \begin{align*}
&(D_x\alpha)(t_1,t_2)= \cos\theta(D_{t_1}b_3+2qB)+\sin\theta(D_{t_2}b_3+2rB)\\
&(D_x\alpha)(t_2,t_2)= \cos\theta(D_{t_1}b_2+2qC)+\sin\theta(D_{t_2}b_2+2rC),
   \end{align*}

Then,
   \begin{align*}
(D_x\alpha)&(x,x)= \cos^3\theta(D_{t_1}b_1-2qC)\\
&+ \sin\theta\cos^2\theta\big(D_{t_2}b_1+ 2 D_{t_1}b_3+ 4qB- 2rC\big)\\
&+ \sin^2\theta\cos\theta\big(D_{t_1}b_2+ 2 D_{t_2}b_3+4rB+ 2qC\big)\\
&+ \sin^3\theta(D_{t_2}b_2+2rC).
   \end{align*}
Since
  $$
  \alpha(x,Jx)=-B\sin 2\theta + C\cos2\theta,
  $$
the determinant may be written as a homogeneous polynomial of fifth degree in the variables $\cos\theta$ and $\sin\theta.$ If we put $p=\tan\theta$ it gives in
general an equation of fifth degree in $p$ that results in at most five strong principal directions (or an infinity if all its coefficients vanish). Then, by using
the last Proposition one can get the respective ribs.

Let $x\ne 0$ be a unit vector obtained by solving the fifth degree equation and put $b=b_{x}=\alpha(x,x)$ and $c=\alpha(x,Jx).$ Let us suppose in addition that
$b\ne 0$ and $Jb\cdot c\ne 0.$ Then, the conditions of Proposition 4.2,1 are satisfied and we will have that the corresponding rib direction is determined by
   $$
u= \frac{b}{\Vert b\Vert^2}-\frac{b\cdot c}{\Vert b\Vert^2 Jb\cdot c}Jb.
   $$

If $b\ne 0$ and  $c=0$ then
   $$
u= \frac{b}{\Vert b\Vert^2}.
   $$

In the first case, suppose that $\kappa'= \kappa'_{x}\ne 0.$ Then $c$ is a multiple of $n=(\nabla_{x}\alpha)(x,x),$ so that if $\kappa= \kappa_{x}$ we may write
   \begin{align*}
u=& \frac{b}{\kappa^2}- \frac{b\cdot n}{\kappa^2 Jb\cdot n}Jb\\
&= \frac{b}{\kappa^2}-\frac{\kappa'}{\kappa^3\tau}Jb,
   \end{align*}
where $\tau$ is the normal torsion of $M$ at $m$ in the direction
$x$.

From definition \ref{2.3} it is known that $u$ determines an \it umbilic direction \rm  at $m$ if and only if the following conditions are true:
\begin{itemize}
\item [(i)] $u\in N_mM.$
\item [(ii)] $g(x,y)-u\cdot\alpha(x,y)=0,$ for any $x, y \in T_mM.$
\end{itemize}

In this case we have a singularity of corank $2$ of the distance squared function on $M$ at $m,$ i.e. in this point the surface has at least $D_k$ contact,
$k \geq 4,$ with the corresponding umbilic focal hypersphere \cite{Montesis}. If $m$ is umbilic then there is some vector $b\in N_mM$ such that we have at $m$ that
$\alpha= b\otimes g.$ If $b=0,$ there are no umbilic directions at $m$. Otherwise, all vectors $u\in N_mM$ in the affine line given by the equation $u\cdot b= 1$
determine umbilic directions. The remaining cases are comprised in the following result, where we have reworded the theorem given in \cite{RF-F}.

\begin{theorem}
Let $m\in M$ be a non umbilic point. There is a vector $u\in N_mM$ determining an umbilic direction at $m\in M$ if and only if $m$ is a semiumbilic non-inflection
point.
\end{theorem}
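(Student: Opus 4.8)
The plan is to turn condition (ii) into a set of \emph{linear} conditions on $u$ by using the curvature‑ellipse parametrisation of Section~2.1, and then to finish by a dimension count in the two–dimensional normal space $N_mM$.

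First I would note that a symmetric bilinear form on a real vector space is determined by its associated quadratic form, so condition (ii) — that $g(x,y)=u\cdot\alpha(x,y)$ for all $x,y\in T_mM$ — is equivalent to $u\cdot\alpha(v,v)=1$ for every unit $v\in T_mM$, i.e. to $u\cdot\eta(\theta)=1$ for all $\theta$, where $\eta(\theta)=H+B\cos2\theta+C\sin2\theta$. Since $\{1,\cos2\theta,\sin2\theta\}$ are linearly independent functions, comparing coefficients shows this is in turn equivalent to the three scalar conditions
$$u\cdot H=1,\qquad u\cdot B=0,\qquad u\cdot C=0.$$
Hence an umbilic direction at $m$ is exactly a vector $u\in N_mM$ that is orthogonal to the linear span of $B$ and $C$ and satisfies $u\cdot H=1$; in particular such a $u$ is necessarily nonzero.

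Next I would split according to $d:=\dim\operatorname{span}\{B,C\}\le 2$, recalling that $B=C=0$ would force $\alpha=H\otimes g$, i.e. $m$ umbilic, which is excluded by hypothesis, so $d\ge 1$. If $d=2$, the only vector of $N_mM$ orthogonal to $\operatorname{span}\{B,C\}$ is $0$, so no admissible $u$ exists; and in this case the curvature ellipse is non‑degenerate, so $m$ is not semiumbilic. If $d=1$, choose a unit $w$ spanning $\operatorname{span}\{B,C\}$ and a unit $w^{\bot}\in N_mM$ orthogonal to it; then the vectors orthogonal to $\operatorname{span}\{B,C\}$ are the multiples $\lambda w^{\bot}$, and $u\cdot H=1$ is solvable iff $w^{\bot}\cdot H\ne 0$, i.e. iff $H\notin\operatorname{span}\{B,C\}$. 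Here $m$ is semiumbilic (the curvature ellipse is a genuine segment), and, using the normal form $C=0$ for the frame at a semiumbilic point recalled in Section~2.1, the segment lies on the line $H+\mathbb R\,B$, which passes through the origin precisely when $H\in\operatorname{span}\{B,C\}$; that is, $H\in\operatorname{span}\{B,C\}$ is exactly the condition that $m$ be an inflection point. Thus for $d=1$ an umbilic direction exists iff $m$ is not an inflection point. Collecting the cases: for a non‑umbilic $m$, an umbilic direction exists iff $d=1$ and $H\notin\operatorname{span}\{B,C\}$, i.e. iff $m$ is semiumbilic and not an inflection point, which is both implications of the theorem; when one holds, an explicit choice is $u=w^{\bot}/(w^{\bot}\cdot H)$.

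I do not expect a genuine obstacle: the argument is essentially a rank count in $N_mM$. The only points needing care are the reduction of the bilinear identity (ii) to the three Fourier conditions, and the bookkeeping showing that the geometric descriptions of \emph{semiumbilic} (degenerate curvature ellipse) and \emph{inflection} (carrying line through the origin) correspond to the algebraic statements $d\le 1$ and $H\in\operatorname{span}\{B,C\}$ respectively — both immediate once the frame is normalised so that $C=0$.
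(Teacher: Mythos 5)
Your argument is correct and follows essentially the same route as the paper: reduce condition (ii) to the three scalar conditions $u\cdot H = 1$, $u\cdot B = 0$, $u\cdot C = 0$, then conclude by a rank/dimension count in the two-dimensional normal space $N_mM$. The paper instead fixes a frame with $B\cdot C=0$, $|B|\ge|C|$ and deduces $C=0$, while you organize the same linear algebra around $d=\dim\operatorname{span}\{B,C\}$; the content is identical.
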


\begin{proof}
Assume that $u\in N_mM$ determines an umbilic direction.  Let $(t_1, t_2)$ be an orthonormal basis of $T_mM$ such that $B\cdot C= 0$ and $|B|\ge |C|.$ Condition (ii)
 is then equivalent to the following three conditions
   $$
 1= u\cdot\alpha(t_1,t_1)= u\cdot b_1,\quad 1= u\cdot\alpha(t_2,t_2)= u\cdot b_2,\quad 0= u\cdot\alpha(t_1,t_2)= u\cdot C.
   $$
Therefore $u\ne 0,\; b_1=\alpha(t_1,t_1)\ne 0$ and $b_2=\alpha(t_2,t_2)\ne 0.$
Also
   $$
\frac12 u \cdot (b_1-b_2)= u\cdot B = 0.
   $$
Since $B$ and $C$ are orthogonal to the non-zero vector $u,$ orthogonal to each other, and $|B|\ge |C|$ we conclude that $C=0.$ Then the curvature ellipse is a
segment and $m$ is semiumbilic. If $b_1$ and $b_2$ where linearly dependent, then both must be equal because their inner products with $u$ are equal. But then $m$
would be umbilic against the hypothesis. If $b_1$ and $b_2$ are linearly independent, then $m$ is not an inflection point, and it is easy to see that
   $$
u=\frac{JB}{H \cdot JB}.
   $$
Conversely, let $m$ be a semiumbilic point that is not an inflection point. Then it is not umbilic. We can choose then the orthonormal basis $(t_1,t_2)$ so that
$C=0.$ It is easy to see that then $H\cdot JB\ne 0,$ so that we can define a vector $u\in N_mM$ by the preceding formula and verify directly that it satisfies
condition (ii).
\end{proof}\hskip 4cm $\square$

\subsection{Application to the asymptotic directions for a surface in $\r5$}

Let $M$ be a surface immersed in $\r 5.$ We denote by $f_{3,u}$ the third order approximation of the function $x\in T_mM \mapsto u\cdot (\exp_m(x)-m),$ that is
  $$
f_{3,u}(x)= u\cdot x + \frac12u\cdot\alpha_m(x,x) -\frac16 \alpha_m(u^\top,x)\cdot\alpha_m(x,x)+\frac16 u^\bot\cdot(\nabla_x\alpha)(x,x).
  $$
In this section, we reword the characterization of asymptotic directions studied in \cite{MRR} and \cite{RF-R-T}.

\begin{definition}
Let $0\ne u\in\r5.$ Then, $u$ determines a \emph{ binormal direction} at $m$ iff the following conditions are true:
\begin{itemize}
\item[(i)] $0$ is a singular point of $f_{3,u};$
\item[(ii)] there is a non-vanishing vector $x\in T_mM$ such that $u\cdot\alpha_m(x,y)=0$ for any $y\in
T_mM$ and such that $f_{3,u}(x)=0.$ We say that such a vector $x$ defines an \it asymptotic direction \rm at $m.$
\end{itemize}
\end{definition}

\begin{theorem}
A vector $0\ne x\in T_mM$ defines an asymptotic direction at $m \in \r5$ if and only if
   \begin{equation}\nonumber
\det\big(\alpha_m(x,t_1),\alpha_m(x,t_2),(\nabla_x\alpha)(x,x)\big)=0.\nonumber
\end{equation}
\end{theorem}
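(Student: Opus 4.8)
The plan is to unwind the two defining conditions (i) and (ii) from the definition of a binormal/asymptotic direction in $\r5$ and show that, after eliminating the normal vector $u$, they collapse to the single determinantal equation. I would start from the explicit cubic $f_{3,u}(x)= u\cdot x + \tfrac12 u\cdot\alpha_m(x,x) -\tfrac16 \alpha_m(u^\top,x)\cdot\alpha_m(x,x)+\tfrac16 u^\bot\cdot(\nabla_x\alpha)(x,x)$. Condition (i), that $0$ be a singular point of $f_{3,u}$, means the linear part vanishes, i.e. $u\cdot\xi=0$ for all $\xi\in T_mM$, so $u\in N_mM$ and $u^\top=0$. This is exactly the observation (made earlier in the excerpt for height functions) that the singular point condition forces $u$ to be normal; it also kills the $\alpha_m(u^\top,x)$ term. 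So on the singular locus $f_{3,u}(x)= \tfrac12 u\cdot\alpha_m(x,x)+\tfrac16 u\cdot(\nabla_x\alpha)(x,x)$ with $u\in N_mM$, $\dim N_mM=3$.

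Next I would use condition (ii) in two pieces. First, the requirement that there be $x\ne 0$ with $u\cdot\alpha_m(x,y)=0$ for all $y\in T_mM$: writing $y=t_1,t_2$ this says $u$ is orthogonal to both $\alpha_m(x,t_1)$ and $\alpha_m(x,t_2)$ in $N_mM$. Taking $y=x$ in particular gives $u\cdot\alpha_m(x,x)=0$, which makes the quadratic term of $f_{3,u}$ vanish identically, so that $f_{3,u}(x)=0$ reduces to $u\cdot(\nabla_x\alpha)(x,x)=0$. Therefore the nonzero normal vector $u$ must be simultaneously orthogonal, inside the three-dimensional space $N_mM$, to the three vectors $\alpha_m(x,t_1)$, $\alpha_m(x,t_2)$ and $(\nabla_x\alpha)(x,x)$. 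The existence of such a $u\ne 0$ is equivalent to these three vectors being linearly dependent in $N_mM$, i.e. to $\det\big(\alpha_m(x,t_1),\alpha_m(x,t_2),(\nabla_x\alpha)(x,x)\big)=0$, which is the claimed condition. This proves the "only if" direction.

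For the converse, assume $x\ne 0$ satisfies the determinantal equation. Then the three vectors $\alpha_m(x,t_1)$, $\alpha_m(x,t_2)$, $(\nabla_x\alpha)(x,x)$ span at most a two-dimensional subspace of the three-dimensional $N_mM$, so there is a nonzero $u\in N_mM$ orthogonal to all of them. This $u$ satisfies $u^\top=0$, hence condition (i); it satisfies $u\cdot\alpha_m(x,t_i)=0$ for $i=1,2$, hence $u\cdot\alpha_m(x,y)=0$ for all $y$; and since also $u\cdot\alpha_m(x,x)=0$ and $u\cdot(\nabla_x\alpha)(x,x)=0$, the value $f_{3,u}(x)=0$. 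Thus all conditions of the definition hold and $x$ defines an asymptotic direction. The one place requiring a little care — the main (though mild) obstacle — is the bookkeeping in reducing $f_{3,u}(x)$ under the successive constraints $u^\top=0$ and $u\cdot\alpha_m(x,x)=0$, making sure the term $\tfrac16\alpha_m(u^\top,x)\cdot\alpha_m(x,x)$ and the quadratic term both genuinely drop out so that only $u\cdot(\nabla_x\alpha)(x,x)$ survives; once that is done the linear-algebra argument about a nonzero vector orthogonal to three vectors in a $3$-space is immediate, exactly as in the $\r4$ case treated in the preceding theorems, but with $\dim N_mM=3$ instead of $2$.
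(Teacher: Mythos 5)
Your proposal is correct and follows the same line as the paper: both reduce conditions (i) and (ii) to the requirement that a nonzero $u\in N_mM$ be orthogonal to $\alpha_m(x,t_1)$, $\alpha_m(x,t_2)$ and $(\nabla_x\alpha)(x,x)$, and then invoke $\dim N_mM=3$ to conclude the determinant must vanish. The only difference is that you spell out explicitly the step the paper labels ``clearly,'' namely the elimination of the $u^\top$ and $u\cdot\alpha_m(x,x)$ terms from $f_{3,u}(x)$.
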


\begin{proof} Assume that $0\ne x\in T_mM$ defines an asymptotic direction. Then there exists $u \in \r5$ with the two properties of the above definition.
These are equi\-valent clearly to the requirements that $u\in N_mM,$ that $u\cdot \alpha_m(x,.)=0$ and that $u\cdot(\nabla_x\alpha)(x,x)=0.$
Now, let $t_1, t_2$ be any basis of $T_mM$. Then the three vectors $\alpha_m(x,t_1),\; \alpha_m(x,t_2),\; (\nabla_x\alpha)(x,x)\in N_mM$ must have a non-vanishing
vector $u\in N_mM$ orthogonal to them all. Since $\dim N_mM=3,$ we conclude that the necessary and sufficient condition for $x$ being an asymptotic direction is that
 those three vectors be linearly dependent, that is
\begin{equation}\nonumber
\det\big(\alpha_m(x,t_1),\alpha_m(x,t_2),(\nabla_x\alpha)(x,x)\big)=0.\nonumber
\end{equation}
\end{proof}
\hskip 1cm $\square$

We have obtained thus a characterization of those asymptotic directions in terms of geometric invariants of the surface. The corresponding equation for the
angle determining those directions can now be computed with the technique used in section 4.1 for the strong principal directions. The program \cite{AMA2}
draws the asymptotic lines, that is those whose tangent is an asymptotic direction at each point.




\section{Appendix}

\verb"Input to be set by user:"
\\
\verb"X = { Cos[u] Cos[v],  Sin[u] Cos[v], Sin[v] };"
\\
\verb"u0 = 1; v0 = 1;"
\\
\\
\verb"Orthonormal tangent basis at {u0, v0}:"
\\
\\
\verb"Xu = D[X, u] /. {u -> u0, v -> v0};  Xu = Xu/Sqrt[Xu.Xu];"
\\
\verb"Xv = D[X, v] /. {u -> u0, v -> v0};"
\\
\verb" Xv = Xv - (Xv.Xu) Xu; Xv = Xv/Sqrt[Xv.Xv];"
\\
\\
\verb"Basis of normal subspace at {u0, v0}:"
\\
\\
\verb"U1 = Cross[Xu, Xv];"
\\
\\
\verb"Verification of both bases: must be non-zero; otherwise,"
\\
\verb"try other initial values for U1, U2, U3 in the above calculation."
\\
\verb" Or perhaps the point is singular."
\\
\\
\verb"N[Det[{Xu, Xv, U1}]]"
\\
\verb"1."
\\
\\
\verb"Direct computation of Monge coordinates"
\\
\\
\verb"Simplify[Solve[X == x Xu  + y Xv +  f1  U1 ]]"
\\
\\
\verb"You may try instead the calculation of the Taylor expansion"
\\
\verb"of Monge coordinates as follows:"
\\
\\
\verb"u = Sum[uu[i, j] x^i y^j, {i, 0, 3}, {j, 0, 3}];"
\\
\verb"v = Sum[vv[i, j] x^i y^j, {i, 0, 3}, {j, 0, 3}];"
\\
\verb"f1 = Sum[g1[i, j] x^i y^j, {i, 0, 3}, {j, 0, 3}];"
\\
\verb"XS = Series[X, {x, 0, 3}, {y, 0, 3}];"
\\
\verb"Monge = LogicalExpand[XS == x Xu  + y Xv +  f1  U1 ] ;"
\\
\verb"Solve[Monge]"
\\
\verb"u =. ; v =. ; f1 =. ;"

 \noindent María García Monera \\Departamento de
Matem\'{a}tica Aplicada \\
Universitat Polit\`{e}cnica de Val\`{e}ncia \\
magar21@upv.es

\vspace*{0.5cm}

\noindent Ángel Montesinos Amilibia \\ Departament de
Geometria i Topologia \\ Universitat de Val\`encia \\
montesin@uv.es

\vspace*{0.5cm}

 \noindent Esther Sanabria Codesal \\ Instituto Universitario
 de Matem\'atica Pura y Aplicada \\ Universitat Polit\`{e}cnica de Val\`{e}ncia \\
esanabri@mat.upv.es

\end{document}